\newtheorem{theorem}{Theorem}[section]
\theoremstyle{plain}
\newtheorem{corollary}[theorem]{Corollary}
\newtheorem{example}[theorem]{Example}
\newtheorem{prop}[theorem]{Proposition}
\newtheorem{remark}[theorem]{Remark}
\def\sone#1#2{\begin{bmatrix}#1 \\ #2 \end{bmatrix}}
\def\ua{\!\!\uparrow}
\def\da{\!\!\downarrow}
\def\GS{\text{GS}}
\def\nua{\text{nua}}
\def\nda{\text{nda}}
\def\gf{A}
\def\apoln#1{A_#1(x)}     
\def\anum#1{A_#1}
\def\asc{\text{asc}}
\def\gfp#1{\gf_#1(x;u,d)}
\def\expgf{\gf(t,x;u,d)}
\def\nn{\nonumber}
\def\fracpar#1#2{\frac{\partial #1}{\partial #2}}
\tikzset{
  dot hidden/.style={},
  line hidden/.style={},
  dot colour/.style={dot hidden/.append style={color=#1}},
  dot colour/.default=black,
  line colour/.style={line hidden/.append style={color=#1}},
  line colour/.default=black
}
\NewDocumentCommand{\domino}{mm}{
\begin{tikzpicture}[x=2em,y=2em,radius=0.1]
\draw[rounded corners=0.5,line hidden] (0,0) rectangle (1,2);

\draw[line hidden] (0,1) -- (1,1);
\ifodd#1
  \fill[dot hidden] (0.5,1.5) circle;
\fi
\ifnum#1>1
   \fill[dot hidden] (0.2,1.2) circle;
   \fill[dot hidden] (0.8,1.8) circle; 
   \ifnum#1>3
      \fill[dot hidden] (0.8,1.2) circle;
      \fill[dot hidden] (0.2,1.8) circle;   
   \fi
   \ifnum#1>5
      \fill[dot hidden] (0.2,1.5) circle;
      \fill[dot hidden] (0.8,1.5) circle;  
   \fi
\fi
\ifodd#2
   \fill[dot hidden] (0.5,0.5) circle;
\fi
\ifnum#2>1
   \fill[dot hidden] (0.2,0.2) circle;
   \fill[dot hidden] (0.8,0.8) circle;
   \ifnum#2>3
      \fill[dot hidden] (0.8,0.2) circle;
      \fill[dot hidden] (0.2,0.8) circle;   
   \fi
   \ifnum#2>5
      \fill[dot hidden] (0.2,0.5) circle;
      \fill[dot hidden] (0.8,0.5) circle;  
   \fi
\fi
\end{tikzpicture}
}
\NewDocumentCommand{\umino}{m}{
\begin{tikzpicture}[x=2em,y=2em,radius=0.1]
\draw[rounded corners=0.5,line hidden] (0,0) rectangle (1.4,1.4);
\node (A) at (0.2, 0.1){};
\node (B) at (0.2, 1.3){};
\draw [->,very thick] (A) -- (B);
\ifodd#1
   \fill[dot hidden] (0.8,0.7) circle;
\fi
\ifnum#1>1
   \fill[dot hidden] (0.5,0.4) circle;
   \fill[dot hidden] (1.1,1.0) circle;
   \ifnum#1>3
      \fill[dot hidden] (1.1,0.4) circle;
      \fill[dot hidden] (0.5,1.0) circle;   
   \fi
   \ifnum#1>5
      \fill[dot hidden] (0.5,0.7) circle;
      \fill[dot hidden] (1.1,0.7) circle;  
   \fi
\fi
\end{tikzpicture}
}
\NewDocumentCommand{\mino}{m}{
\begin{tikzpicture}[x=2em,y=2em,radius=0.1]
\draw[rounded corners=0.5,line hidden] (0,0) rectangle (1.0,1.0);

\ifodd#1
   \fill[dot hidden] (0.5,0.5) circle;
\fi
\ifnum#1>1
   \fill[dot hidden] (0.2,0.2) circle;
   \fill[dot hidden] (0.8,0.8) circle;
   \ifnum#1>3
      \fill[dot hidden] (0.8,0.2) circle;
      \fill[dot hidden] (0.2,0.8) circle;   
   \fi
   \ifnum#1>5
      \fill[dot hidden] (0.2,0.5) circle;
      \fill[dot hidden] (0.8,0.5) circle;  
   \fi
\fi
\end{tikzpicture}
}
\begin{document}
\title[Generalized permutations]{Generalized permutations related to the degenerate Eulerian numbers}

\author[O.~Herscovici]{Orli Herscovici}
\address{O.~Herscovici\\
Department of Mathematics, University of Haifa, 3498838  Haifa,
Israel}
\email{orli.herscovici@gmail.com}
\maketitle

\begin{abstract}
In this work we propose a combinatorial model that generalizes the standard definition of permutation. Our model generalizes the degenerate Eulerian polynomials and numbers of Carlitz from 1979 and provides  missing combinatorial proofs for some relations on the 
degenerate Eulerian numbers.
\end{abstract}

\bigskip

\noindent{\sc Keywords:} degenerate Eulerian polynomials; degenerate Eulerian numbers; permutations
\smallskip

\noindent 2010 {\sc Mathematics Subject Classification:}
05A05; 05A15; 11B83

\section{Motivation}
Permutation is one of the widely studied combinatorial objects. 
A common definition of a permutation is an arranged sequence of  
$n$ different objects: for example the elements of the set 
$[n]=\{1,2,\ldots,n\}$. Given a permutation $\pi=\pi_1\pi_2\cdots\pi_n$, 
then any index $1\leq k\leq n$ is either an ascent or a descent 
of the permutation $\pi$. If $\pi_k<\pi_{k+1}$ then $k$ is an ascent, 
otherwise it is a descent of $\pi$ (i.e. $\pi_k>\pi_{k+1}$). The last index of the permutation is neither ascent or descent. Let us 
denote by $\anum{{n,k}}$ a number of permutations of a length $n$ 
with exactly $k$ ascents, then $\anum{{n,k}}$ are the Eulerian 
numbers and they are the coefficients of the Eulerian polynomials 
$\apoln{n}$,  i.e. $\apoln{n}=\sum_{k=0}^n\anum{{n,k}}x^k$.  
The Eulerian polynomials $\apoln{n}$ are given by the following  
exponential function
\begin{align}
\frac{x-1}{x-e^{t(x-1)}}=\sum_{n\geq 0}\apoln{n}\frac{t^n}{n!}. \label{p12GF:Eulerian}
\end{align}
The Eulerian numbers satisfy the recurrence relation 
\begin{align}
\anum{{n,k}}=(k+1)\anum{{n-1,k}}+(n-k)\anum{{n-1,k-1}}, \label{p12:rec1}
\end{align}
and
\begin{align}
\sum_{k=0}^{n-1}\anum{{n,k}}=n!\label{p12:rec2}
\end{align}

In 1954 Carlitz proposed a $q$-Eulerian polynomials \cite{Carlitz1954}, which are a generalization of the Eulerian polynomials based on $q$-exponential function and $q$-calculus \cite{Kac2002}. Later, in 1975, he show that this generalization is related to two statistics on permutations: the number of descents and the major index, which is a  sum of all descent indices \cite{Carlitz1975}.  

In his work \cite{Carlitz1979} from 1979 Carlitz defined degenerate Eulerian numbers $\anum{{n,k}}(\lambda)$ as
\begin{align}
\frac{1-x}{1-x(1+\lambda t(1-x))^\mu}=1+\sum_{n=1}^\infty\sum_{k=1}^n\anum{{n,k}}(\lambda)x^k\frac{t^n}{n!}. \label{p12GF:degEulPol}
\end{align}
The degenerate Eulerian numbers satisfy a very similar to \eqref{p12:rec2} equation
\begin{align}
\sum_{k=0}^{n-1}\anum{{n,k}}(\lambda)=n!\label{p12:rec3}
\end{align}
and a recurrence relation 
\begin{align}
\anum{{n,k}}(\lambda)=(k+1-(n-1)\lambda)\anum{{n-1,k}}+(n-k+(n-1)\lambda)\anum{{n-1,k-1}}. \label{p12:rec4}
\end{align}
Both results \eqref{p12:rec3} and \eqref{p12:rec4} can be proved analytically by using the generating function \eqref{p12GF:degEulPol}, however there is neither combinatorial interpretation of the degenerate Eulerian numbers $\anum{{n,k}}(\lambda)$ nor combinatorial proof of these equations.

In this work we propose a combinatorial model that generalizes the standard definition of permutation. Our model not only provides a combinatorial proof for \eqref{p12:rec3} -\eqref{p12:rec4}, but also it generalizes the degenerate Eulerian polynomials and numbers of Carlitz.

\section{New extension of permutations}
Let us consider a permutation of $[n]$ as a permutation of tiles with dots, similar to domino but containing one single square: \resizebox{0.5cm}{0.4cm}{%
\mino{1}
}, 
\resizebox{0.5cm}{0.4cm}{%
\mino{2}
}, 
\resizebox{0.5cm}{0.4cm}{%
\mino{3}
}, \ldots. 
Each one of such tiles is invariant under rotation by $180^\circ$. 
Let us consider a recursive construction of permutation starting from 
a permutation containing a single tile \resizebox{0.5cm}{0.4cm} {\mino{1}}. 
Assuming given a permutation $\pi$ of a length $n-1$, a permutation $\pi'$ 
of a length $n$ can be constructed from $\pi$ by inserting a tile with 
$n$ dots. Such insertion  not necessarily adds an ascent into the
permutation. Let us now at each step to add another tile with a special property -- this tile inserted into permutation will add an ascent. We depict this tile as 
\resizebox{0.5cm}{0.4cm}{\umino{2}}. Obviously, the rotation by $180^\circ$ turns it into
\resizebox{0.5cm}{0.4cm}{\rotatebox[origin=c]{180}{\umino{2}}}.

These tiles are different under rotation, so we define different actions 
for these two positions. From hereon we denote a tile with upward 
arrow by number of dots followed by a symbol ``$\uparrow$'' and a 
tile with downward arrow by number  of dots followed by a symbol 
``$\downarrow$''. For example, the tiles above will be denoted by 
$2\!\uparrow$ and $2\!\downarrow$, respectively, while a tile without 
arrows will be denoted by the number of dots on it. Therefore at the 
step $n$ of recursive construction of permutation we add one of 
three possible tiles: regular $n$ or $n\!\!\uparrow$ or its rotation $n\!
\!\downarrow$. Let us denote by $\asc(\pi)$ a set of ascents of the permutation $\pi=\pi_1\pi_2\cdots\pi_{n-1}$, i.e.
\begin{align}
\asc(\pi)=\{\,i\, | \, \pi_i<\pi_{i+1}\},\label{p12:DEFasc}
\end{align}
and by $|\asc(\pi)|$ its cardinality.
Now we need to define the rules for tiles insertion. We formulate them as following.
\begin{enumerate}
\item[\textbf{Rule1:}]   insertion $n$ in the first position does not change the number of ascents 
\begin{align*}
|\asc(\pi')|=|\asc(\pi)|, \quad\text{and}\quad \asc(\pi')=\{k\,|\,k-1\in\asc(\pi)\}.
\end{align*}
\item[\textbf{Rule2:}]  insertion $n$ in the last position will always increase the number of ascents by producing a new ascent at the position $n-1$
\begin{align*}
|\asc(\pi')|=|\asc(\pi)|+1, \quad\text{and}\quad \asc(\pi')=\asc(\pi)\cup\{n-1\}.
\end{align*}
\item[\textbf{Rule3:}]  insertion $n$ in position $i$ for $2\leq i\leq n$  
will cause to that ascents before on left from $n$ will stay without change and ascents on the right from them will increase by 1. Moreover,
\begin{enumerate}
\item if $i-1$ was an ascent then it is still an ascent 
\begin{align*}
|\asc(\pi')|&=|\asc(\pi)|, \\ \text{and}\quad \asc(\pi')&=\{k<i\,|\,k\in\asc(\pi)\}\cup
\{k+1\,|\,k\in\asc(\pi),\,k\geq i\}.
\end{align*}
\item  if $i-1$ was not an ascent then it turns to an ascent 
\begin{align*}
|\asc(\pi')|&=|\asc(\pi)|+1, \\ \quad\quad\text{and}\quad \asc(\pi')&=\{k<i\,|\,k\in\asc(\pi)\}\cup\{i\}\cup\{k+1\,|\,k\in\asc(\pi),\,k\geq i\}.
\end{align*}
\end{enumerate}
\item[\textbf{Rule4:}]   insertion $n\ua$ in position $i<n$ causes that $i$ is an ascent for all $1\leq i\leq n-1$
\begin{align*}
|\asc(\pi')|&=|\asc(\pi)|+1, \\ \quad\quad\text{and}\quad \asc(\pi')&=\{k<i\,|\,k\in\asc(\pi)\}\cup\{i\}\cup\{k+1\,|\,k\in\asc(\pi),\,k\geq i\}.
\end{align*}
Note, it means that a tile $n\ua$ can be inserted on the \emph{left} from any of $n-1$ tiles of permutation.

\item[\textbf{Rule5}]   insertion $n\da$ in position $i<n$ will not change existing ascents of permutations
\begin{align*}
|\asc(\pi')|&=|\asc(\pi)|, \\ \quad\text{and}\quad \asc(\pi')&=\{k<i\,|\,k\in\asc(\pi)\}\cup\{k+1\,|\,k\in\asc(\pi),\,k\geq i\}.
\end{align*}
Tile $n\da$ can be inserted on the \emph{left} from any of $n-1$ tiles of permutation.
\end{enumerate}
We call a permutation $\pi$ {\it{generalized}} if it can be obtained from a trivial permutation of $[1]$ recursively by applying rules Rule1--Rule5.  
\begin{example}
Let us consider a recursive construction of a  permutation $\pi=2\!\!\downarrow,4,3\!\!\uparrow,1$. 
\begin{itemize}
\item[Step 0:] a trivial permutation $\pi =1$ with no ascents.
\item[Step 1:] insertion of a tile $2\!\!\downarrow$ on the left from $1$, so that $\pi =2\!\!\downarrow,1$ and w.r.t the Rule 5 the number of ascents is not changed.
\item[Step 2:] insertion of a tile $3\!\!\uparrow$ on the left from $1$, so that $\pi=2\!\!\downarrow,3\!\!\uparrow,1$. In accordance with the Rule4 we obtain that $\asc(\pi)=\{2\}$ and, respectively, $|\asc(\pi)|=1$.
\item[Step 4:] insertion of a tile $4$ so that $\pi=2\!\!\downarrow,4,3\!\!\uparrow,1$. In accordance with the Rule3(b) we obtain that $\asc(\pi)=\{1,3\}$ and, respectively, $|\asc(\pi)|=2$.
\end{itemize}
One can see that the permutation $\pi=2\!\!\downarrow,4,3\!\!\uparrow,1$ can be obtained by  Rule1-Rule5 and, thus, it is a generalized permutation.

\begin{remark}
Given a generalized permutation $\pi=\pi_1\pi_2\cdots\pi_n$ such that $\pi_i=k\!\!\uparrow$ or $\pi_i=k\!\!\downarrow$ for some $2\leq k\leq n$. It follows from the Rule1-Rule5 that there exist an index $j>i$ and integer $m<k$ such that the $\pi_j=m$.  
\end{remark}

\begin{example}
Let us consider another  permutation $\pi=2\!\!\downarrow,1,3\!\!\uparrow,4$. This is not a generalized permutation w.r.t. the  Rule1-Rule5. We can check by recursive construction. The first two steps are identical to the Step 0 and Step 1 from the previous example. However, the insertion of the tile $3\!\!\uparrow$ is incorrect w.r.t. the Rule4. 
\end{example}
\end{example}

Let us denote by $\GS_{n}$ the set of all generalized permutations of the set $[n]$,  
and by $|\GS_n|$ its cardinality.
\begin{prop} For any generalized permutation $\pi\in\GS_n$, we have $0\leq|\asc(\pi)|<n$.
\end{prop}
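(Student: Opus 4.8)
The plan is to prove the two bounds separately by induction on $n$, tracking how each of the five insertion rules affects $|\asc(\pi)|$. For the base case, the trivial permutation of $[1]$ has $|\asc|=0$ and $n=1$, so $0\leq 0<1$ holds. For the inductive step, I would assume every $\pi\in\GS_{n-1}$ satisfies $0\leq|\asc(\pi)|<n-1$, and then examine a generalized permutation $\pi'\in\GS_n$ obtained from some $\pi\in\GS_{n-1}$ by one rule application.

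The lower bound $0\leq|\asc(\pi')|$ is immediate since $|\asc(\pi')|$ is the cardinality of a set, so it is nonnegative by definition; no induction is really needed there. The substance is the strict upper bound $|\asc(\pi')|<n$, i.e. $|\asc(\pi')|\leq n-1$. First I would read off from each rule's stated formula exactly how the ascent count changes: Rule1, Rule3(a), and Rule5 leave $|\asc|$ unchanged, while Rule2, Rule3(b), and Rule4 increase it by exactly $1$. By the inductive hypothesis $|\asc(\pi)|\leq n-2$, so the rules that preserve the count give $|\asc(\pi')|=|\asc(\pi)|\leq n-2<n-1$, which is comfortably within the bound. The rules that add one give $|\asc(\pi')|=|\asc(\pi)|+1\leq (n-2)+1=n-1$, which is exactly the bound $n-1<n$. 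So in every case $|\asc(\pi')|\leq n-1<n$, closing the induction.

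The one point requiring genuine care, and the place I expect the main obstacle, is verifying that the inductive hypothesis upper bound $|\asc(\pi)|\leq n-2$ is tight enough and that no rule can ever produce two new ascents at once. The formulas as stated each declare a change of $0$ or $+1$, so if I take those formulas as given the argument is clean; the real work is confirming that the stated set descriptions for $\asc(\pi')$ genuinely have the claimed cardinalities, i.e. that the map $k\mapsto k+1$ on indices $k\geq i$ is injective and disjoint from the retained indices $k<i$ and from any newly inserted singleton such as $\{i\}$. Checking these disjointness and injectivity claims for Rule3(b) and Rule4 (where a singleton $\{i\}$ is adjoined) is the delicate bookkeeping, but it is routine once one observes that the three pieces $\{k<i\}$, $\{i\}$, and $\{k+1: k\geq i\}$ occupy disjoint ranges of indices. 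Having established that each rule changes the cardinality by at most $1$, the bound follows by the induction above.
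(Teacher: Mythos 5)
Your proof is correct and is essentially the formalization of what the paper does: the paper's proof is the one-line assertion that the bound ``follows immediately from the rules Rule1--Rule5,'' and your induction on the recursive construction---observing that each rule changes $|\asc(\pi)|$ by $0$ or $+1$, so $n-1$ insertions starting from zero ascents yield at most $n-1$ ascents---is exactly the argument that assertion is gesturing at. No discrepancy in approach; you have simply supplied the bookkeeping the paper omits.
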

\begin{proof} It follows immediately from the rules Rule1-Rule5.
\end{proof}
\begin{prop} A number of generalized permutations $\GS_n$ of 
a set $[n]$ is given by a following generating function
\begin{align}
\frac{1}{\sqrt[3]{(1-3t)}}=\sum_{n=0}^\infty|\GS_n|\frac{t^n}{n!},\label{p12:eq1}
\end{align}
and
\begin{align}
|\GS_n|=\prod_{k=1}^n(3k-2)=\sum_{j=0}^{n-1}3^{j}\cdot|s(n,n-j)|,\label{p12:eq2}
\end{align}
where $s(n,j)$ are the Stirling numbers of the first kind.

\end{prop}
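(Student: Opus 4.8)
The plan is to first establish the recurrence $|\GS_n| = (3n-2)\,|\GS_{n-1}|$ by a bijective analysis of the insertion step, and then to read off both closed forms from it. The crucial structural observation is that the recursive construction inserts tiles in increasing order of their dot-count, so in any $\pi \in \GS_n$ the tile carrying $n$ dots is necessarily the one added last. Consequently, deleting that tile while recording its type and position is a well-defined inverse to the insertion step, sending $\GS_n$ to $\GS_{n-1}$. This reduces the whole count to a single local question: for a fixed $\pi \in \GS_{n-1}$, how many (type, position) choices produce a valid $\pi' \in \GS_n$?

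I would count the three tile types separately. A regular tile $n$ may be placed in any of the $n$ gaps of $\pi$, with Rules 1--3 together covering the first, the last, and the internal positions, giving $n$ choices. A tile $n\ua$ may be inserted to the left of any of the $n-1$ existing tiles by Rule 4, and likewise $n\da$ gives $n-1$ choices by Rule 5. Since these choices are visibly distinct---the type of tile $n$ and its position are recorded in $\pi'$ and recovered by the deletion map---no permutation is produced twice, and the total is $n + (n-1) + (n-1) = 3n-2$, independent of $\pi$. With the base case $|\GS_1| = 1$ this yields $|\GS_n| = \prod_{k=1}^n(3k-2)$.

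For the exponential generating function, write $f(t) = \sum_{n \ge 0} |\GS_n|\, t^n/n!$ and feed the recurrence $|\GS_n| = (3n-2)|\GS_{n-1}|$ into the coefficients; this produces the first-order linear ODE $(1-3t) f'(t) = f(t)$ with $f(0) = 1$. Separating variables gives $f(t) = (1-3t)^{-1/3}$, which is \eqref{p12:eq1}. Alternatively one may read \eqref{p12:eq1} straight off the binomial series, since $\binom{-1/3}{n}(-3)^n = \prod_{k=1}^n(3k-2)/n!$. Finally, the Stirling identity in \eqref{p12:eq2} follows from the signless Stirling expansion of the rising factorial, $\prod_{i=0}^{n-1}(x+i) = \sum_{k=0}^n |s(n,k)|\, x^k$. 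Factoring out $3^n$ gives $\prod_{k=1}^n(3k-2) = 3^n \prod_{i=0}^{n-1}(i + \tfrac13) = \sum_{k=0}^n |s(n,k)|\, 3^{n-k}$; since $|s(n,0)| = 0$ for $n \ge 1$, reindexing by $j = n-k$ produces $\sum_{j=0}^{n-1} 3^j\, |s(n,n-j)|$.

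I expect the main obstacle to be the first step: verifying that exactly $3n-2$ insertions are admissible for \emph{every} $\pi \in \GS_{n-1}$ and that the insertion/deletion maps are mutually inverse, so that they genuinely biject $\GS_{n-1} \times \{3n-2 \text{ choices}\}$ with $\GS_n$. Once that recurrence is secured, the generating-function and Stirling-number parts are routine manipulations.
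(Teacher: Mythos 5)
Your proposal is correct, and its combinatorial core is exactly the paper's: both arguments hinge on the count of admissible insertions of the $n$-tile into a fixed $\pi\in\GS_{n-1}$, namely $n$ positions for a regular tile $n$, $n-1$ for $n\!\!\uparrow$, and $n-1$ for $n\!\!\downarrow$, giving $|\GS_n|=(3n-2)|\GS_{n-1}|$ (your explicit deletion-map argument justifying that this is a bijection is a welcome tightening of what the paper leaves implicit in its induction). Where you genuinely diverge is in the second half: the paper disposes of both closed forms by substituting $u=3$ into identities quoted from earlier work, namely $\prod_{k=0}^{n-1}(ku+1)=\sum_{j=1}^n|s(n,j)|u^{n-j}$ and $(1-ut)^{-1/u}=\sum_{n\geq 0}\sum_{j=0}^n|s(n,n-j)|u^j\frac{t^n}{n!}$, whereas you prove both from scratch --- the exponential generating function via the first-order ODE $(1-3t)f'(t)=f(t)$ (or the binomial series), and the Stirling identity via the signless expansion of the rising factorial $\prod_{i=0}^{n-1}(x+i)=\sum_{k=0}^n|s(n,k)|x^k$ at $x=\tfrac13$, using $|s(n,0)|=0$ to truncate the sum. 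Your route is self-contained and elementary; the paper's citation route is shorter and keeps the parameter $u$ general, which pays off later when the same identities are reused with parameters $d$ and $u$ in the corollaries on $\gf(n,0;u,d)$ and $\gf(n,n-1;u,d)$.
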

\begin{proof}
Let us start from the first part of the equation \eqref{p12:eq2}. 
The proof is by induction on $n$. There is only one permutation of 
$[1]$, i.e. $|\GS_1|=1$ and the statement holds.
Assuming it holds for $n-1$ for $n\geq 2$, we will prove it for $n$. 
Let $\pi=\pi_1\pi_2\ldots\pi_{n-1}\in\GS_{n-1}$. We construct a 
 permutation of length $n$ by inserting one of tiles $n$, $n\ua$, or 
$n\da$ into $\pi$. The tile $n$ might be inserted before or after 
any $\pi_i$, which means $n$ possibilities. The tile $n\ua$ might 
be inserted only on the left from $\pi_i$, which means $n-1$ 
possibilities. In the same way we have $n-1$ possibilities for 
$n\da$. Gathering all cases together, we obtain $|\GS_n|=|
\GS_{n-1}|\cdot(3n-2)$. Recursive substitution gives that $|\GS_n|
=\prod_{k=1}^n(3k-2)$.
 
Note that $\prod_{k=1}^n(3k-2)=\prod_{k=0}^{n-1}(3k+1)$. It was 
shown in \cite{Herscovici2017} that $\prod_{k=0}^{n-1}
(ku+1)=\sum_{j=1}^n|s(n,j)|u^{n-j}$, where $s(n,j)$ are the Stirling 
numbers of the first kind, and in \cite{Herscovici2019} that
$(1-ut)^{-\frac{1}{u}}=\sum_{n\geq 0}\sum_{j=0}^n|s(n,n-j)|u^{j}\frac{t^n}{n}$. Therefore, substitution $u=3$ completes the proof.
\end{proof}

In the next section we obtain a recurrence relation for the generalized permutations.

\section{Recurrence relation}
Let $\pi=\pi_1\pi_2\ldots\pi_n\in\GS_n$ be a generalized permutation of $[n]$. We denote  by $\nua(\pi)$ a number of tiles with $\,\,\ua$ in $\pi$ and by $\nda(\pi)$ a number of tiles with $\,\,\da$ in $\pi$.
We denote by $\gf(n,k;u,d)$ a generating function for the generalized permutations of $[n]$ with exactly $k$ ascents according to the statistics $\nua(\pi)$ and $\nda(\pi)$, that is
\begin{align}
\gf(n,k;u,d)=\sum_{\pi\in\GS_n}u^{\nua(\pi)}d^{\nda(\pi)}.
\end{align}
It follows immediately from this definition that 
\begin{align}
\gf(n,k;u,d)=\left\{\begin{array}{cl}\neq 0 & \text{for $0\leq k<n$ for all $n\in\mathbb{N}$}, \\ =0 & \text{otherwise}.\end{array}\right.
\end{align}
\begin{theorem}\label{p9::thm1}
The generating function $\gf(n,k;u,d)$ 
satisfies the following recurrence relation
\begin{align}
\gf(n,k;u,d)&=(k+1+(n-1)d)\gf(n-1,k;u,d)\nn\\
&+(n-k+(n-1)u)\gf(n-1,k-1;u,d),\label{p12:AnkRecRel}
\end{align}
with initial condition $\gf(1,0;u,d)=1$.
\end{theorem}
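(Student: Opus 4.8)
The plan is to prove the recurrence by the classical ``largest-letter'' argument adapted to the tile model. By the recursive definition, every $\pi' \in \GS_n$ is produced from a unique parent $\pi \in \GS_{n-1}$ by inserting the tile carrying $n$ dots in one of its three forms (regular $n$, $n\ua$, or $n\da$); conversely, deleting the unique largest tile from $\pi'$ recovers $\pi$ together with the form and position of the inserted tile, and the remaining tiles keep their values and arrow-decorations so that the parent lies in $\GS_{n-1}$. First I would make this correspondence precise, noting that it identifies $\GS_n$ with the set of pairs (parent in $\GS_{n-1}$, admissible insertion of $n$); as a sanity check, each parent admits $n + (n-1) + (n-1) = 3n-2$ insertions, matching the count in the previous proposition. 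This lets me write $\gf(n,k;u,d)$ as a sum over parents $\pi$ of $u^{\nua(\pi)}d^{\nda(\pi)}$ times the number, weighted by the new arrow if any, of admissible insertions producing exactly $k$ ascents.

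Then, fixing a parent $\pi$ of length $n-1$ with $j := |\asc(\pi)|$ ascents, I would tabulate the effect of each tile type. The regular tile is the delicate case. It can occupy any of the $n$ slots, and because $n$ exceeds every other value, Rules 1--3 give: the first slot preserves $j$; the last slot gives $j+1$; and for each of the $n-2$ interior slots the outcome is dictated by whether the neighbouring position of $\pi$ is an ascent (Rule 3(a), preserving $j$) or a descent (Rule 3(b), giving $j+1$). Since $\pi$ has $j$ ascents and hence $n-2-j$ descents among its $n-2$ interior positions, exactly $1+j = j+1$ regular slots preserve the ascent count and $1+(n-2-j) = n-1-j$ raise it by one, all with weight $1$. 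The arrow tiles are immediate from Rules 4 and 5: each of the $n-1$ admissible insertions of $n\ua$ raises the count to $j+1$ and carries a factor $u$, while each of the $n-1$ admissible insertions of $n\da$ leaves the count at $j$ and carries a factor $d$.

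Finally I would collect the contributions by the ascent number $k$ of $\pi'$. Parents with $j=k$ contribute through ascent-preserving insertions --- the $k+1$ regular slots and the $n-1$ down-slots --- with total multiplier $k+1+(n-1)d$, producing $(k+1+(n-1)d)\,\gf(n-1,k;u,d)$; parents with $j=k-1$ contribute through ascent-raising insertions --- the $n-1-j = n-k$ regular slots and the $n-1$ up-slots --- with total multiplier $n-k+(n-1)u$, producing $(n-k+(n-1)u)\,\gf(n-1,k-1;u,d)$. Adding these two families over all parents gives the claimed relation, and the base case $\gf(1,0;u,d)=1$ holds because $[1]$ has a single generalized permutation, with no arrows and no ascents. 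I expect the only real obstacle to be the bookkeeping in the regular-tile case: keeping the off-by-ones straight (a length-$(n-1)$ permutation has $n-2$ interior comparison positions, split as $j$ ascents and $n-2-j$ descents) and correctly matching Rule 3(a)/(b) to the ascent/descent status of the adjacent position. Once the $j+1$ versus $n-1-j$ split is verified, the arrow cases and the final summation are routine.
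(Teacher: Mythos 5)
Your proposal is correct and takes essentially the same approach as the paper: both arguments classify each element of $\GS_n$ by the ascent count ($k$ or $k-1$) of its parent in $\GS_{n-1}$ and count the weighted insertions of $n$, $n\ua$, and $n\da$ under Rules 1--5, obtaining the multipliers $k+1+(n-1)d$ and $n-k+(n-1)u$. Your more explicit slot bookkeeping (first/interior/last, with the $j+1$ versus $n-1-j$ split) is just a carefully spelled-out version of the paper's count, which folds the last slot into the ``after a non-ascent position'' case.
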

\begin{proof}
From the Rule1--Rule5 for recursive construction of generalized permutations, it follows that a generalized permutation of $[n]$ with exactly $k$ ascents can be obtained  from a generalized permutation of $[n-1]$ either with $k-1$ ascents or with $k$ ascents. If a permutation of $[n-1]$ has $k$ ascents than the number of ascents should not be increased. It can be achieved in two ways, namely, by adding either $n$  or $n\da$. The adding $n$ will not increase ascents if it will be inserted either in the first position or after any of existing $k$ ascents. There are $(k+1)\gf(n-1,k;u,d)$ possibilities. The tile $n\da$ might be inserted on the left from any element of permutation of $[n-1]$. Each such insertion will increase the statistic \nda, and, therefore there are $(n-1)d\gf(n-1,k;u,d)$ possibilities.

If a permutation of $[n-1]$ has $k-1$ ascents, then the number of ascents should be increased. It can be achieved either by inserting $n$ or by inserting $n\ua$. The insertion of $n\ua$ will increase the number of ascents while it is inserted on the left from any of $n-1$ tiles. Each such insertion increases the statistic $\nua$. Therefore there are $(n-1)u\gf(n-1,k-1;u,d)$ possibilities. The insertion of $n$ will increase the number of ascents only if $n$ is inserted after non-ascent position. There are $(n-k)\gf(n-1,k-1;u,d)$ possibilities to do it.
Summarising over these four cases completes the proof.
\end{proof}
\begin{example}
By applying the Theorem~\ref{p9::thm1}, we obtain the following explicit expressions for the first values of $\gf(n,k;u,d)$.
\begin{align}
\begin{tabular}{lll}
$\gf(1,0)=1$ & & \\
$\gf(2,0)=1+d$ & $\gf(2,1)=1+u$ &\\
$\gf(3,0)=1+3d+2d^2$ & $\gf(3,1)=4+4d+4u+4du$ & $\gf(3,2)=1+3u+2u^2$\\
\end{tabular}
\end{align}
\end{example}
\begin{corollary}
For all integers $n$
\begin{align}
\sum_{k=0}^{n-1}\gf(n,k;1,1)=|\GS_n|.
\end{align}
\end{corollary}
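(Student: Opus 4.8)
The plan is to read off the left-hand side combinatorially. Specializing $u=d=1$ removes the two statistics $\nua$ and $\nda$ from the weight $u^{\nua(\pi)}d^{\nda(\pi)}$, so that $\gf(n,k;1,1)$ reduces to a plain enumeration: it counts exactly those generalized permutations $\pi\in\GS_n$ whose ascent number $|\asc(\pi)|$ equals $k$. The sum on the left-hand side then aggregates these counts over all possible values of the ascent number.

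First I would invoke the earlier proposition, which guarantees that every $\pi\in\GS_n$ satisfies $0\le|\asc(\pi)|<n$. This means the family of sets $\{\pi\in\GS_n : |\asc(\pi)|=k\}$ for $k=0,1,\ldots,n-1$ forms a partition of $\GS_n$: the blocks are pairwise disjoint, since a permutation has a single well-defined ascent number, and their union is all of $\GS_n$, since no value of $|\asc(\pi)|$ outside $\{0,\ldots,n-1\}$ ever occurs. Summing cardinalities over the blocks of a partition then yields $\sum_{k=0}^{n-1}\gf(n,k;1,1)=|\GS_n|$ at once.

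As an alternative purely algebraic verification I would set $u=d=1$ in the recurrence of Theorem~\ref{p9::thm1}, obtaining $\gf(n,k;1,1)=(k+n)\gf(n-1,k;1,1)+(2n-1-k)\gf(n-1,k-1;1,1)$, then define $S_n=\sum_{k=0}^{n-1}\gf(n,k;1,1)$ and sum over $k$. After reindexing the second sum by $k\mapsto k+1$ and combining, the $k$-dependent coefficients cancel and leave $S_n=(3n-2)S_{n-1}$ with $S_1=1$. This is precisely the recurrence $|\GS_n|=(3n-2)|\GS_{n-1}|$ established earlier in the proof of $|\GS_n|=\prod_{k=1}^n(3k-2)$, so the two quantities agree for every $n$.

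There is no genuine obstacle in this argument: the identity is essentially immediate once the specialization $u=d=1$ is interpreted as a count rather than a weighted sum. The only point requiring a moment of care is the boundary bookkeeping, namely that $\gf(n-1,k;u,d)=0$ whenever $k\ge n-1$ or $k<0$. This is exactly what lets the index shift in the algebraic version go through cleanly, and correspondingly it is what guarantees that the combinatorial partition of $\GS_n$ has neither a missing nor a superfluous block.
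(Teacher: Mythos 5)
Your proof is correct and matches the paper's (implicit) reasoning: the paper states this corollary without proof precisely because, once $u=d=1$ turns the weighted sum $\gf(n,k;1,1)$ into the plain count of generalized permutations with exactly $k$ ascents, the identity follows from the earlier proposition that $0\le|\asc(\pi)|<n$ partitions $\GS_n$ by ascent number --- which is exactly your first argument. Your additional algebraic verification via the recurrence, where $(k+n)+(2n-2-k)=3n-2$ cancels the $k$-dependence and recovers $S_n=(3n-2)S_{n-1}$, is a correct and pleasant consistency check but not needed.
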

\begin{corollary} \label{p9::cor1}
The ordinary generating function for generalized permutations of $[n]$ with no ascents has the following form
\begin{align}
\gf(n,0;u,d)=\sum_{j=0}^{n}\sone{n}{n-j}d^j,
\end{align}
while its exponential generating function is given by
\begin{align}
(1-d\cdot t)^{-\frac{1}{d}}=\sum_{n=0}^\infty\gf(n,0;u,d)\frac{t^n}{n!}.\label{p9::gf1}
\end{align}
\end{corollary}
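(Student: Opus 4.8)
The plan is to exploit the recurrence of Theorem~\ref{p9::thm1} at the distinguished value $k=0$, where it degenerates into a first-order recurrence, and then to invoke the two Stirling-number identities already quoted from \cite{Herscovici2017} and \cite{Herscovici2019}.

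First I would substitute $k=0$ into \eqref{p12:AnkRecRel}. The second summand carries the factor $\gf(n-1,-1;u,d)$, which vanishes since a generalized permutation cannot have a negative number of ascents. Thus the recurrence collapses to $\gf(n,0;u,d)=(1+(n-1)d)\,\gf(n-1,0;u,d)$, and in particular the entire $u$-dependence disappears (consistent with the fact that a permutation with no ascents can never have invoked Rule4). Iterating this relation from the initial value $\gf(1,0;u,d)=1$ telescopes into the product $\gf(n,0;u,d)=\prod_{k=1}^{n-1}(1+kd)=\prod_{k=0}^{n-1}(1+kd)$, which can be sanity-checked against the tabulated $\gf(2,0)=1+d$ and $\gf(3,0)=1+3d+2d^2$.

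Next, to obtain the ordinary generating function in terms of Stirling numbers of the first kind, I would apply the identity $\prod_{k=0}^{n-1}(ku+1)=\sum_{j=1}^n|s(n,j)|u^{n-j}$ with the substitution $u=d$. Reindexing by $i=n-j$ and observing that the would-be $i=n$ term $\sone{n}{0}$ vanishes for $n\geq 1$ rewrites the product exactly as $\sum_{j=0}^n\sone{n}{n-j}d^j$, which is the first claim.

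Finally, for the exponential generating function I would invoke $(1-ut)^{-1/u}=\sum_{n\geq 0}\sum_{j=0}^n|s(n,n-j)|u^j\frac{t^n}{n!}$ from \cite{Herscovici2019} at $u=d$; its inner coefficient is precisely the expression for $\gf(n,0;u,d)$ just derived, so \eqref{p9::gf1} follows at once. The only delicate point in this argument is the bookkeeping of Stirling-number indices and the vanishing of the boundary terms; the essential observation, which does all the real work, is that setting $k=0$ decouples the two-term recurrence into a single product.
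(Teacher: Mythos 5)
Your proposal is correct and takes essentially the same route as the paper's own proof: set $k=0$ in the recurrence of Theorem~\ref{p9::thm1} (the term with $\gf(n-1,-1;u,d)$ vanishing), telescope to the product $\prod_{j=0}^{n-1}(jd+1)$, expand it via the Stirling-number identity of \cite{Herscovici2017}, and read off the exponential generating function from \cite{Herscovici2019}. The only difference is cosmetic: you make explicit the vanishing of the boundary terms and the index reindexing that the paper treats as immediate.
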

\begin{proof}
It follows immediately from the Theorem~\ref{p9::thm1}, that
\begin{align}
\gf(n,0;u,d)&=(1+(n-1)d)\gf(n-1,0;u,d).
\end{align}
Recursive substitution leads to $\gf(n,0;u,d)=\displaystyle{\prod_{j=0}^{n-1}(jd+1)}$, which can be expanded as $\gf(n,0;u,d)=\displaystyle{\sum_{j=0}^{n}\sone{n}{n-j}d^j}$ (see \cite{Herscovici2017}). It was shown in  \cite{Herscovici2019} that this sum is the coefficient of $\dfrac{t^n}{n!}$ in expansion into Taylor series of the function $(1-d\cdot t)^{-\frac{1}{d}}$, and the proof is complete.
\end{proof}
\begin{corollary} The ordinary generating function $\gf(n,n-1;u,d)$ for generalized permutations of $[n]$ with maximal number of ascents has the following form
\begin{align}
\gf(n,n-1;u,d)=\sum_{j=0}^{n}\sone{n}{n-j}u^j,
\end{align}
while its exponential generating function is given by
\begin{align}
(1-u\cdot t)^{-\frac{1}{u}}=\sum_{n=0}^\infty\gf(n,n-1;u,d)\frac{t^n}{n!}.
\end{align}
\end{corollary}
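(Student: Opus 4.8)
The plan is to mirror the proof of Corollary~\ref{p9::cor1} exactly, with the roles of $u$ and $d$ interchanged, by extracting a first-order recurrence for $\gf(n,n-1;u,d)$ from Theorem~\ref{p9::thm1}. First I would specialize the recurrence \eqref{p12:AnkRecRel} to the top diagonal $k=n-1$. Writing $k=n-1$ gives the coefficient $k+1+(n-1)d=n+(n-1)d$ on $\gf(n-1,n-1;u,d)$ and the coefficient $n-k+(n-1)u=1+(n-1)u$ on $\gf(n-1,n-2;u,d)$.

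The key observation is that the first summand vanishes identically. Indeed, a generalized permutation of $[n-1]$ has at most $n-2$ ascents, so $\gf(n-1,n-1;u,d)=0$ by the vanishing convention established for $\gf(n,k;u,d)$ (it is $0$ whenever $k\geq n$, here $k=n-1\geq n-1$). Hence the specialization collapses to the clean recurrence $\gf(n,n-1;u,d)=(1+(n-1)u)\gf(n-1,n-2;u,d)$. This has a transparent combinatorial reading: to reach the maximal ascent count one must increase the number of ascents at every step, and from a permutation of $[n-1]$ with $n-2$ ascents the only ascent-increasing insertions are the single placement of the plain tile $n$ after the unique non-ascent position (the factor $1$) together with the $n-1$ placements of $n\ua$ (the factor $(n-1)u$).

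Next I would solve this recurrence by recursive substitution, starting from the initial condition $\gf(1,0;u,d)=1$, to obtain $\gf(n,n-1;u,d)=\prod_{j=0}^{n-1}(ju+1)$. At this point the argument is literally the one used in Corollary~\ref{p9::cor1}: invoking the identity $\prod_{k=0}^{n-1}(ku+1)=\sum_{j=1}^{n}|s(n,j)|u^{n-j}$ from \cite{Herscovici2017} rewrites the product as $\sum_{j=0}^{n}\sone{n}{n-j}u^j$, and the expansion $(1-ut)^{-1/u}=\sum_{n\geq 0}\sum_{j=0}^{n}|s(n,n-j)|u^{j}\,t^n/n!$ from \cite{Herscovici2019} identifies $(1-u\cdot t)^{-1/u}$ as the exponential generating function, completing the proof.

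I do not expect a genuine obstacle here, since the statement is the $u\leftrightarrow d$ dual of Corollary~\ref{p9::cor1}; the only point requiring care is justifying that the boundary term $\gf(n-1,n-1;u,d)$ is zero before asserting the reduced recurrence, which is exactly where the dual symmetry of the two corollaries originates. The remaining steps are the already-cited Stirling-number and generating-function identities, so no new calculation is needed beyond the index bookkeeping in the recursive substitution.
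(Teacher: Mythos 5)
Your proposal is correct and follows essentially the same route as the paper: specializing the recurrence of Theorem~\ref{p9::thm1} to $k=n-1$ to get $\gf(n,n-1;u,d)=(1+(n-1)u)\gf(n-1,n-2;u,d)$, solving by recursive substitution to obtain $\prod_{j=0}^{n-1}(ju+1)$, and then citing the same Stirling-number and generating-function identities used in Corollary~\ref{p9::cor1}. The only difference is that you explicitly justify the vanishing of the boundary term $\gf(n-1,n-1;u,d)$, a step the paper compresses into ``it follows immediately,'' so your write-up is if anything slightly more complete.
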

\begin{proof}It follows immediately from the Theorem~\ref{p9::thm1}, that
\begin{align}
\gf(n,n-1;u,d)&=(1+(n-1)u)\gf(n-1,n-2;u,d).
\end{align}
Recursive substitution leads to $\gf(n,n-1;u,d)=\displaystyle{\prod_{j=0}^{n-1}(ju+1)}$, and the rest is similar to the proof of the Corollary~\ref{p9::cor1}.
\end{proof}

\begin{remark}
Obviously, there is a simple bijection between generalized permutations of $[n]$ with no ascents and generalized permutations of $[n]$ with $n-1$ ascents (maximal number of ascents in a permutation of $[n]$).
\end{remark}
\begin{theorem}[Connection to the degenerate Eulerian numbers] 
For all integers $n,k$, the degenerate Eulerian numbers $\anum{{n,k}}(\lambda)$ defined by the generating function \eqref{p12GF:degEulPol} satisfy
\begin{align*}
\anum{{n,k}}(\lambda)=\gf(n,k;\lambda,-\lambda).
\end{align*}
\end{theorem}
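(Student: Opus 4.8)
The plan is to identify both sides as solutions of one and the same recurrence and then finish by induction on $n$. The engine is the substitution $u=\lambda$, $d=-\lambda$ in the recurrence of Theorem~\ref{p9::thm1}. Under it the coefficient $k+1+(n-1)d$ becomes $k+1-(n-1)\lambda$ and $n-k+(n-1)u$ becomes $n-k+(n-1)\lambda$, so that
\[
\gf(n,k;\lambda,-\lambda)=(k+1-(n-1)\lambda)\,\gf(n-1,k;\lambda,-\lambda)+(n-k+(n-1)\lambda)\,\gf(n-1,k-1;\lambda,-\lambda).
\]
This is, term for term, the recurrence \eqref{p12:rec4} satisfied by the degenerate Eulerian numbers $\anum{{n,k}}(\lambda)$. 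Thus the recurrence step of the proof is essentially free, and the real content of the theorem is transported into matching the boundary data.

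Next I would pin down the two boundary ingredients needed to run the induction. On the combinatorial side, the definition of $\gf$ gives $\gf(n,k;u,d)=0$ for $k<0$ and for $k\geq n$, while the initial condition $\gf(1,0;u,d)=1$ yields $\gf(1,0;\lambda,-\lambda)=1$. On the analytic side, I would extract from \eqref{p12GF:degEulPol} the corresponding facts: that $\anum{{n,k}}(\lambda)$ is supported on $0\leq k\leq n-1$ and that the single nonzero entry of row $n=1$ equals $1$. With the vanishing conditions and the base case aligned on both sides, a routine induction on $n$ closes the argument, the common recurrence carrying equality from row $n-1$ to row $n$ and the vanishing conditions handling the extreme values $k=0$ and $k=n-1$.

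I expect the one genuinely delicate point to be reconciling the indexing of \eqref{p12GF:degEulPol} with that of $\gf(n,k;u,d)$ and of the recurrence \eqref{p12:rec4}. Reading the exponent as $\mu=1/\lambda$ (the normalization that recovers the classical Eulerian generating function \eqref{p12GF:Eulerian} as $\lambda\to0$) and extracting the coefficient of $t/1!$ from \eqref{p12GF:degEulPol} returns $x$, which places the lone $n=1$ value at $x^{1}$; by contrast $\gf$ and \eqref{p12:rec4} are normalized so that the $n=1$ value sits at $k=0$. Care is therefore needed to fix the convention so that the summation range and the base case truly line up as $0\leq k\leq n-1$ and $\anum{{1,0}}(\lambda)=1$. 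The small hand-computations $\anum{{2,0}}(\lambda)=1-\lambda=\gf(2,0;\lambda,-\lambda)$ and $\anum{{2,1}}(\lambda)=1+\lambda=\gf(2,1;\lambda,-\lambda)$ are a good check that the conventions have been set consistently; once they are, nothing further is required, since the substituted recurrence coincides with \eqref{p12:rec4} and the induction yields $\anum{{n,k}}(\lambda)=\gf(n,k;\lambda,-\lambda)$ for all $n,k$.

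Should one prefer to sidestep the boundary bookkeeping, an alternative route is to verify the identity directly at the level of generating functions: form the series $\sum_{n\geq 0}\sum_{k}\gf(n,k;\lambda,-\lambda)\,x^{k}\,t^{n}/n!$, use Theorem~\ref{p9::thm1} to show it satisfies the first-order functional equation characterizing the right-hand side of \eqref{p12GF:degEulPol}, and solve. I would keep this merely as a fallback, as the recurrence-plus-induction path is shorter and makes the matching of initial and vanishing conditions fully explicit.
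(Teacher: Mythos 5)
The paper states this theorem with no proof at all---the theorem is followed immediately by the corollary---so there is no argument of the author's to compare yours against; your proposal is correct and supplies exactly the argument the paper leaves implicit. Substituting $u=\lambda$, $d=-\lambda$ into \eqref{p12:AnkRecRel} reproduces \eqref{p12:rec4} term for term, both arrays vanish outside $0\leq k\leq n-1$, and both equal $1$ at the single entry of row $n=1$, so induction on $n$ carries equality from one row to the next. Your caution about indexing is also substantive rather than pedantic: it flags a genuine inconsistency inside the paper itself, since \eqref{p12GF:degEulPol} as printed indexes the inner sum by $1\leq k\leq n$ (reading $\mu=1/\lambda$, the coefficient of $t/1!$ there is $x$, so the lone $n=1$ entry sits at $k=1$), whereas \eqref{p12:rec3}, \eqref{p12:rec4} and the theorem's own statement use the range $0\leq k\leq n-1$. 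The two conventions differ by a unit shift in $k$, and the theorem is true in the latter one, i.e.\ $\gf(n,k;\lambda,-\lambda)$ equals the coefficient of $x^{k+1}t^n/n!$ in \eqref{p12GF:degEulPol}; your row-$2$ check, $\gf(2,0;\lambda,-\lambda)=1-\lambda$ and $\gf(2,1;\lambda,-\lambda)=1+\lambda$ against the expansion of \eqref{p12GF:degEulPol}, confirms that this is the correct alignment, and once it is fixed the induction closes the proof as you describe.
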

\begin{corollary}[Combinatorial proof of \eqref{p12:rec3}]
For all integers $n$
\begin{align}
\sum_{k=0}^{n-1}\gf(n,k;\lambda,-\lambda)=n!
\end{align}
\end{corollary}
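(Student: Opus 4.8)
The plan is to read the left-hand side as a signed enumeration of generalized permutations and to cancel the unwanted terms by a sign-reversing involution. First I would drop the restriction to a fixed number of ascents: summing the definition of $\gf(n,k;u,d)$ over $k$ and substituting $u=\lambda$, $d=-\lambda$ gives
\[
\sum_{k=0}^{n-1}\gf(n,k;\lambda,-\lambda)
=\sum_{\pi\in\GS_n}\lambda^{\nua(\pi)}(-\lambda)^{\nda(\pi)}
=\sum_{\pi\in\GS_n}(-1)^{\nda(\pi)}\,\lambda^{\nua(\pi)+\nda(\pi)}.
\]
Writing $S_n(\lambda)$ for this polynomial in $\lambda$, the assertion is that $S_n(\lambda)=n!$ identically. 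Two things must be shown: that the constant term equals $n!$, and that every coefficient of $\lambda^m$ with $m\ge 1$ vanishes. The constant term collects exactly those $\pi$ with $\nua(\pi)=\nda(\pi)=0$, i.e. the generalized permutations built using only the plain tiles via Rule1--Rule3; these are precisely the ordinary permutations of $[n]$, so the constant term is $n!$.

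For the vanishing of the higher coefficients I would use a fixed-point-free, sign-reversing involution $\iota$ on the set of generalized permutations carrying at least one arrow. Given such a $\pi=\pi_1\cdots\pi_n$, let $i$ be the least index for which $\pi_i$ is an arrow tile, and let $\iota(\pi)$ be obtained from $\pi$ by reversing the direction of that single arrow, i.e. replacing $k\ua$ by $k\da$ or vice versa. Since reversing an arrow neither removes a tile nor changes which tile is the leftmost arrow, $\iota$ is an involution with no fixed points on this set. Moreover $\iota$ preserves the underlying ordinary permutation and the total number of arrows $\nua+\nda$ (hence the power of $\lambda$), while it changes $\nda$ by exactly $\pm1$ and therefore flips the sign $(-1)^{\nda(\pi)}$. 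Consequently the contributions of $\pi$ and $\iota(\pi)$ cancel, and every coefficient of $\lambda^m$ with $m\ge 1$ is zero, leaving $S_n(\lambda)=n!$.

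The step I expect to be the crux is checking that $\iota$ maps $\GS_n$ into $\GS_n$, that is, that flipping one arrow preserves the property of being a generalized permutation. I would argue this from the recursive construction: fix a sequence of insertions producing $\pi$, and modify only the step that inserts the distinguished tile, swapping its use of Rule4 for Rule5 (or Rule5 for Rule4). This is legitimate because both rules allow insertion of the new tile to the left of any existing tile, with no condition on the current ascent set; and because the validity of every later insertion likewise depends only on the positions of the existing tiles, never on the directions of their arrows, the whole modified record is valid and produces $\iota(\pi)$. Thus $\iota(\pi)\in\GS_n$.

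As an independent confirmation one can avoid the involution entirely and read the identity off the recurrence of Theorem~\ref{p9::thm1}. Setting $S_n(\lambda)=\sum_{k=0}^{n-1}\gf(n,k;\lambda,-\lambda)$ and summing \eqref{p12:AnkRecRel} over $k$ with $u=\lambda$, $d=-\lambda$, the two ``diagonal'' contributions combine via $(k+1)+(n-1-k)=n$ into $n\,S_{n-1}(\lambda)$, while the remaining terms carry the common factor $(n-1)(u+d)=(n-1)(\lambda-\lambda)=0$ and drop out. Hence $S_n(\lambda)=n\,S_{n-1}(\lambda)$ with $S_1(\lambda)=\gf(1,0;\lambda,-\lambda)=1$, which gives $S_n(\lambda)=n!$ at once.
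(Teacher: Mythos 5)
Your proof is correct, and its core --- cancelling the arrow-carrying permutations in signed pairs so that only the $n!$ arrow-free, i.e.\ ordinary, permutations survive --- is precisely the idea of the paper's own proof. The difference is completeness: the paper asserts the cancellation in two sentences, whereas you exhibit the mechanism, a fixed-point-free sign-reversing involution that flips the leftmost arrow, and you verify the one point that genuinely requires an argument, namely that $\GS_n$ is closed under such a flip. Your justification (Rule4 and Rule5 both permit insertion to the left of any existing tile, and the legality of every later insertion depends only on tile positions, never on arrow directions or on the current ascent set) is exactly what is needed; note that this closure check cannot be skipped, because the flip \emph{does} change the number of ascents, which is also why summing over $k$ before cancelling, as you do, is essential. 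Your closing observation --- that summing \eqref{p12:AnkRecRel} over $k$ at $u=\lambda$, $d=-\lambda$ makes the two coefficients combine to $n+(n-1)(u+d)=n$, giving $S_n(\lambda)=n\,S_{n-1}(\lambda)$ --- is a second, genuinely different proof that does not appear in the paper: it is shorter and purely algebraic, but it leans on Theorem~\ref{p9::thm1}, whereas the involution argument is self-contained and explains combinatorially why the identity holds identically in $\lambda$.
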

\begin{proof}
Let us assign to the tiles with upward arrow $\lambda$ and to the tiles with downward arrow $-\lambda$ for all generalized permutations of a length $n$. Now summing over all coefficients will be equivalent to summing over all permutations without tiles containing arrows -- standard permutations of a length $n$. There are exactly $n!$ permutations without arrows, and the proof is complete.
\end{proof}

\section{Generalized  polynomials and their generating function}
Let us define a generalization of the degenerate Eulerian polynomials $\apoln{n}$ as
\begin{align}
\gfp{n}=\sum_{k=0}^{n-1}\gf(n,k;u,d)x^k.\label{p12:GF-An}
\end{align}
\begin{theorem}
For all positive integers $n$, the polynomials $\gf_n(x;u,d)$ satisfy the following recurrence relation
\begin{align}
\gfp{n}=\left[1+(n-1)\big(d+(1+u)x\big)\right]\gfp{{n-1}}+(x-x^2)\fracpar{}{x}\gfp{{n-1}}, \label{p12:AnRecRel}
\end{align}
with initial conditions $\gfp{0}=1$ and $\gfp{n}=0$ for negative $n$.
\end{theorem}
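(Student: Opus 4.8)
The plan is to derive the polynomial recurrence \eqref{p12:AnRecRel} directly from the coefficient recurrence \eqref{p12:AnkRecRel} established in Theorem~\ref{p9::thm1}, by multiplying by $x^k$ and summing over $k$. Starting from the definition \eqref{p12:GF-An}, I would write
\begin{align*}
\gfp{n}=\sum_{k=0}^{n-1}\gf(n,k;u,d)x^k
=\sum_{k}\Big[(k+1+(n-1)d)\gf(n-1,k;u,d)+(n-k+(n-1)u)\gf(n-1,k-1;u,d)\Big]x^k,
\end{align*}
where the sum ranges over all $k$ for which the terms are nonzero (the boundary conventions $\gf(n,k;u,d)=0$ for $k<0$ or $k\geq n$ make the index bookkeeping automatic). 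The whole task then reduces to rewriting each of the four resulting sums as a combination of $\gfp{{n-1}}$ and its derivative.

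The key computational identities I would use are the standard operator facts that $\sum_k k\,c_k x^k = x\frac{d}{dx}\sum_k c_k x^k$ and that shifting the index $k\mapsto k-1$ contributes a factor $x$. Concretely, first I would group the terms not involving a shift. The term $(n-1)d\,\gf(n-1,k;u,d)$ sums to $(n-1)d\,\gfp{{n-1}}$; the term $\gf(n-1,k;u,d)$ sums to $\gfp{{n-1}}$; and the term $k\,\gf(n-1,k;u,d)$ sums to $x\,\fracpar{}{x}\gfp{{n-1}}$. For the shifted terms, reindexing $k\mapsto k+1$ turns $\sum_k \gf(n-1,k-1;u,d)x^k$ into $x\,\gfp{{n-1}}$, turns $\sum_k (n-1)u\,\gf(n-1,k-1;u,d)x^k$ into $(n-1)u\,x\,\gfp{{n-1}}$, and turns $\sum_k (n-k)\gf(n-1,k-1;u,d)x^k$ into a combination of $x\,\gfp{{n-1}}$ and $x\cdot x\frac{d}{dx}\gfp{{n-1}}$. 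Handling this last sum is the step requiring the most care: writing $n-k=(n-1)-(k-1)+1-1$ — or more cleanly substituting $j=k-1$ so that $n-k=(n-1)-j$ — produces both a plain $x\,\gfp{{n-1}}$ piece and a $-x\cdot x\,\fracpar{}{x}\gfp{{n-1}}$ piece, which combines with the unshifted $x\,\fracpar{}{x}\gfp{{n-1}}$ to yield the $(x-x^2)\fracpar{}{x}\gfp{{n-1}}$ on the right-hand side.

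Collecting all six contributions, the terms proportional to $\gfp{{n-1}}$ are $(n-1)d$, $1$, $(n-1)ux$, and the $x$-pieces coming from the reindexed sums, which I expect to assemble into the bracket $1+(n-1)\big(d+(1+u)x\big)$, while the two derivative contributions combine into $(x-x^2)\fracpar{}{x}\gfp{{n-1}}$. The main obstacle is purely organizational rather than conceptual: one must keep the $\pm 1$ index shifts and the coefficient $n-k$ consistent so that the $x^2\fracpar{}{x}$ term appears with the correct sign and the leftover $x$-multiples collect exactly into the coefficient $(n-1)(1+u)x$. Finally, I would verify the initial conditions directly from \eqref{p12:GF-An}: $\gfp{0}$ is the empty sum giving $1$ only if we adopt the stated convention, and $\gfp{n}=0$ for negative $n$ follows from the same convention, matching the initial condition $\gf(1,0;u,d)=1$ of Theorem~\ref{p9::thm1}.
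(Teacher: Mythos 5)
Your proposal is correct and follows essentially the same route as the paper: multiply the coefficient recurrence \eqref{p12:AnkRecRel} by $x^k$, sum over $k$, and convert factors of $k$ into $x\fracpar{}{x}$ acting on $\gfp{{n-1}}$, collecting the pieces into the stated coefficients. The only difference is bookkeeping: you sum over all $k$ under the convention $\gf(n,k;u,d)=0$ outside $0\leq k<n$ (which indeed makes the boundary terms vanish automatically, since the recurrence remains valid at $k=0$ and $k\geq n$ under that convention), whereas the paper sums over $k\geq 1$ and then cancels the leftover terms $\gf(n,0;u,d)-[1+(n-1)d]\gf(n-1,0;u,d)$ using the $k=0$ case of \eqref{p12:AnkRecRel}; both are valid.
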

\begin{proof}
Multiplying \eqref{p12:AnkRecRel} by $x^{k-1}$ and summing over all $k\geq 1$ we obtain
\begin{align}
\sum_{k\geq 1}\gf(n,k;u,d)x^k&=\sum_{k\geq 1}(k+1)\gf(n-1,k;u,d)x^k\nn\\
&+(n-1)d\sum_{k\geq 1}\gf(n-1,k;u,d)x^k\nn\\
&+(n-k)\sum_{k\geq 1}\gf(n-1,k-1;u,d)x^k\nn\\
&+(n-1)u\sum_{k\geq 1}\gf(n-1,k-1;u,d)x^k.
\end{align}
By using the definition \eqref{p12:GF-An}, the last equation can be written as
\begin{align}
\gfp{n}-\gf(n,0;u,d)&=\fracpar{}{x}\left[x(\gfp{{n-1}}-\gf(n-1,0;u,d))\right]\nn\\
&+(n-1)d\cdot\big(\gfp{{n-1}}-\gf(n-1,0;u,d)\big)\nn\\
&+nx\gfp{{n-1}}-x\fracpar{}{x}\big(x\gfp{{n-1}}\big)\nn\\
&+(n-1)ux\gfp{{n-1}}.
\end{align}
Differentiating w.r.t. to $x$ and collecting coefficients of $\gfp{{n-1}}$ and $\fracpar{}{x}\gfp{{n-1}}$ lead to
\begin{align}
\gfp{n}&=\big[1+(n-1)d+nx-x+(n-1)ux\big]\gfp{{n-1}}\nn\\
&+\big[x-x^2\big]\fracpar{}{x}\gfp{{n-1}}\nn\\
&+\gf(n,0;u,d)-\big[1+(n-1)d\big]\gf(n-1,0;u,d).
\end{align}
It follows from \eqref{p12:AnkRecRel} that $\gf(n,0;u,d)-\big[1+(n-1)d\big]\gf(n-1,0;u,d)=0$. Finally, simplification of the coefficient of $\gfp{{n-1}}$ completes the proof.
\end{proof}
Let us denote by $\expgf$ an exponential generating function for the polynomials $\gfp{n}$, i.e.
\begin{align}
\expgf=\sum_{n\geq 0}\gfp{n}\frac{t^n}{n!}.\label{p12:GF-A}
\end{align}
\begin{theorem}
The generating function 
$\expgf$ satisfies a following differential equation
\begin{align}
\expgf=-(x-x^2)\fracpar{}{x}\expgf+\left[1-\big((1+u)x+d)t\big)\right]\fracpar{}{t}\expgf.
\end{align}
\end{theorem}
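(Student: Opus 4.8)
The plan is to derive the differential equation directly from the polynomial recurrence \eqref{p12:AnRecRel} by passing to the exponential generating function \eqref{p12:GF-A}, in exactly the same spirit as the preceding proof converted the scalar recurrence into a polynomial one. First I would reindex \eqref{p12:AnRecRel} by replacing $n$ with $n+1$: since that recurrence holds for all positive integers $n$, the shifted identity
\begin{align*}
\gfp{{n+1}}=\left[1+n\big(d+(1+u)x\big)\right]\gfp{n}+(x-x^2)\fracpar{}{x}\gfp{n}
\end{align*}
holds for all $n\geq 0$. This shift is the decisive move: summing \eqref{p12:AnRecRel} in its original form would produce the awkward term $\sum_{n\geq 1}\gf_{n-1}(x;u,d)\,t^n/n!$, which is an antiderivative of $\expgf$ in $t$ rather than a derivative, whereas the shifted form puts $\gfp{{n+1}}$ on the left and yields $\fracpar{}{t}\expgf$ cleanly.

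Next I would multiply the shifted recurrence by $t^n/n!$ and sum over $n\geq 0$, interchanging summation with differentiation (legitimate at the level of formal power series in $t$). The computation rests on three identities, each obtained by term-by-term differentiation of \eqref{p12:GF-A}:
\begin{align*}
\sum_{n\geq 0}\gfp{{n+1}}\frac{t^n}{n!}&=\fracpar{}{t}\expgf,\\
\sum_{n\geq 0} n\,\gfp{n}\frac{t^n}{n!}&=t\fracpar{}{t}\expgf,\\
\sum_{n\geq 0}\left(\fracpar{}{x}\gfp{n}\right)\frac{t^n}{n!}&=\fracpar{}{x}\expgf.
\end{align*}
Under these, the left-hand side of the summed recurrence becomes $\fracpar{}{t}\expgf$, while the three terms on the right become $\expgf$, the product $\big(d+(1+u)x\big)\,t\,\fracpar{}{t}\expgf$, and $(x-x^2)\fracpar{}{x}\expgf$, respectively.

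Collecting these contributions gives
\begin{align*}
\fracpar{}{t}\expgf=\expgf+\big((1+u)x+d\big)\,t\,\fracpar{}{t}\expgf+(x-x^2)\fracpar{}{x}\expgf,
\end{align*}
and isolating $\expgf$ on the left reproduces exactly the asserted equation. I expect no genuine obstacle here: the only points requiring care are the validity of the reindexing, which uses that \eqref{p12:AnRecRel} holds down to $n=1$ and is consistent with $\gfp{0}=1$ together with the forced value $\gfp{1}=1$, and the justification of swapping the sums with the operators $\fracpar{}{t}$ and $\fracpar{}{x}$, both routine for formal power series. The middle identity $\sum_{n\geq 0} n\,\gfp{n}t^n/n!=t\,\fracpar{}{t}\expgf$ is the one most easily mishandled, since the factor $n$ must first be absorbed into the factorial before the series can be recognized as $t\,\fracpar{}{t}\expgf$.
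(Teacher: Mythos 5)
Your proposal is correct and follows essentially the same route as the paper: the paper multiplies \eqref{p12:AnRecRel} by $t^{n-1}/(n-1)!$ and sums over $n\geq 1$, which is exactly your reindexed summation in disguise, and both arrive at $\fracpar{}{t}\expgf=\expgf+\big[(1+u)x+d\big]t\fracpar{}{t}\expgf+(x-x^2)\fracpar{}{x}\expgf$ before rearranging. The three generating-function identities you isolate are precisely the steps the paper performs implicitly, so there is no substantive difference.
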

\begin{proof}
Let us multiply \eqref{p12:AnRecRel} by $\frac{t^{n-1}}{(n-1)!}$ and sum over $n\geq 1$. We obtain
\begin{align}
\sum_{n\geq1}\gfp{n}\frac{t^{n-1}}{(n-1)!}&=\sum_{n\geq 1}\gfp{{n-1}}\frac{t^{n-1}}{(n-1)!}\nn\\
&+\big[(1+u)x+d\big]\sum_{n\geq 1}(n-1)\gfp{{n-1}}\frac{t^{n-1}}{(n-1)!}\nn\\
&+(x-x^2)\fracpar{}{x}\sum_{n\geq1}\gfp{{n-1}}\frac{t^{n-1}}{(n-1)!}.
\end{align}
By using \eqref{p12:GF-A} it can be written as
\begin{align}
\fracpar{}{t}\expgf
&=\expgf+\big[(1+u)x+d\big]t\fracpar{}{t}\expgf\nn\\
&+(x-x^2)\fracpar{}{x}\expgf.
\end{align}
Rearrangement of the terms completes the proof.
\end{proof}
\textbf{Acknowledgement}.
This research was supported by the Israel Science Foundation (grants No. 1692/17, No. 1144/16).

\bibliographystyle{plain}

\end{document}